\newcommand{\xr}[1]{\textcolor{red}{#1}}
\newtheorem{thm}{Theorem}[section]
\newtheorem{cor}[thm]{Corollary}
\newtheorem{prop}[thm]{Proposition}
\theoremstyle{definition}
\newtheorem{defn}[thm]{Definition}
\newtheorem{ques}[thm]{Question}
\newtheorem{exmp}[thm]{Example}
\newtheorem{rem}[thm]{Remark}
\newtheorem*{ack}{Acknowledgments}
\newcommand{\Supp}[0]{{\operatorname{Supp}}}
\newcounter{stepnum}
\newcommand{\Step}{%
\par
\refstepcounter{stepnum}%
\textbf{Step \arabic{stepnum}}.\enspace\ignorespaces
}
\title[Some remarks on log surfaces]
{Some remarks on log surfaces}
\author{Haidong Liu}
\date{\xr{2016/12/2, 10:55, version 0.26}}
\keywords{$\varepsilon$-log terminal, minimal model program on log surfaces}
\address{Department of Mathematics, Graduate School of Science,
Kyoto University, Kyoto 606-8502, Japan}
\email{liu.dong.82u@math.kyoto-u.ac.jp}
\begin{document}
\maketitle
\begin{abstract} 
Fujino and Tanaka established 
the minimal model theory for $\mathbb Q$-factorial 
log surfaces in characteristic $0$ and $p$, respectively. 
We prove that every intermediate surface has only log 
terminal singularities if we run the minimal model program 
starting with a pair 
consisting of a smooth surface and a boundary $\mathbb R$-divisor. 
We further show that such a property does not 
hold if the initial surface is singular. 
\end{abstract}

\section{Introduction}\label{sec1}
We work over an algebraically closed field of arbitrary characteristic
throughout this paper.
We will also follow the language and notational conventions of the book \cite{km}
unless stated otherwise. 

Let $(X,\Delta)$ be a log surface.
Remember that a pair $(X,\Delta)$ is called {\em{log surface}} if
$X$ is a normal algebraic surface and $\Delta$ is a boundary
$\mathbb R$-divisor on $X$ such that 
$K_{X}+\Delta$ is $\mathbb R$-Cartier.
To complete Fujita's results \cite{ft} on the semi-ampleness of 
semi-positive parts of Zariski decompositions of log canonical divisors 
and the finite generation of log canonical rings for smooth projective log surfaces,
Fujino \cite{fujino} developed the log minimal model program for projective log surfaces
in characteristic $0$.
It is generalized to characteristic $p>0$ by Tanaka in his paper \cite{tnk}.
One of their main results is the following:

\begin{thm}[{\cite[Theorem 3.3]{fujino}, \cite[Theorem 1.1]{tnk}}] \label{thm1.1}
Let $(X,\Delta)$ be a log surface which is not necessarily log canonical,
and let $\pi: X \rightarrow S$ be a projective morphism onto an
algebraic variety $S$. Assume that $X$ is $\mathbb Q$-factorial.
Then we can run the log minimal model program over $S$ with respect 
to $K_{X}+\Delta$ and get a sequence of at most $\rho(X/S)-1$ contractions
$$
(X,\Delta)=(X_{0},\Delta_{0}) \rightarrow (X_{1},\Delta_{1}) 
\rightarrow
\cdots
\rightarrow (X_{k},\Delta_{k})=(X^{*},\Delta^{*})
$$
over $S$ such that one of the following holds:
\begin{itemize}
\item[(1)] (Minimal model) $K_{X^{*}}+\Delta^{*}$ is nef over $S$. In this case,
$(X^{*},\Delta^{*})$ is called a minimal model of $(X,\Delta)$.
\item[(2)] (Mori fiber space) There is a morphism 
$g: X^{*} \rightarrow C$ over $S$ such that $-(K_{X^{*}}+\Delta^{*})$
is g-ample, $\rm{dim}C<2$, and $\rho(X^{*}/C)=1$. We sometimes call
$g:(X^{*},\Delta^{*}) \rightarrow C$ a Mori fiber space.
\end{itemize}
Note that $X_{i}$ is $\mathbb Q$-factorial for every $i$.
Furthermore, if $K_{X}+\Delta$ is big, then on the minimal model
$(X^{*},\Delta^{*})$,
 $K_{X^{*}}+\Delta^{*}$ is nef and big over $S$.
\end{thm}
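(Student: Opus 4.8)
The plan is to run the minimal model program in the usual manner, so the crux is to establish a cone theorem and a contraction theorem that remain valid for a $\mathbb{Q}$-factorial log surface $(X,\Delta)$ which need not be log canonical, and then to force termination through the Picard number.

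First I would prove the relative cone theorem: that $\overline{NE}(X/S)$ is spanned by its $(K_X+\Delta)$-nonnegative part together with countably many $(K_X+\Delta)$-negative extremal rays, each generated by a rational curve, with these rays discrete away from the hyperplane $(K_X+\Delta)=0$. Dimension two is favorable here because the Hodge index theorem rigidly constrains the negative part of the cone, and the extremal rational curves can be produced by a bend-and-break argument, which is available in every characteristic. The delicate point is precisely that $(X,\Delta)$ is not assumed log canonical, so Kawamata--Viehweg vanishing cannot be invoked directly; in characteristic $0$ one circumvents this by reducing to a log canonical model after adjusting coefficients, whereas in characteristic $p$ Tanaka replaces vanishing entirely by a direct surface-theoretic analysis of the extremal contractions. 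I expect this to be the main obstacle.

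Next I would apply the contraction theorem: every $(K_X+\Delta)$-negative extremal ray $R$ is contracted by a morphism $\varphi_R\colon X\to Y$ over $S$ with $\rho(X/Y)=1$. On a surface only two cases occur. If $\dim Y=2$, then $\varphi_R$ is birational and contracts a single irreducible curve $C$ with $(K_X+\Delta)\cdot C<0$; using $\mathbb{Q}$-factoriality of $X$ one checks that $Y$ is again a normal $\mathbb{Q}$-factorial surface, that $\Delta_Y:=(\varphi_R)_*\Delta$ is a boundary with $K_Y+\Delta_Y$ $\mathbb{R}$-Cartier, and that $\rho(Y/S)=\rho(X/S)-1$. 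If instead $\dim Y<2$, then $g:=\varphi_R$ realizes the Mori fiber space of conclusion (2) and the program stops.

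Termination is then immediate: each birational contraction drops the relative Picard number by exactly one while $\rho(X_i/S)\geq 1$ throughout, so after at most $\rho(X/S)-1$ steps the program ends either with $K_{X^*}+\Delta^*$ nef over $S$ as in (1) or with a Mori fiber space as in (2); the $\mathbb{Q}$-factoriality of each $X_i$ is exactly what the divisorial step preserves. For the final assertion, when the program terminates in a minimal model every contraction $X_i\to X_{i+1}$ is birational, so $K_{X^*}+\Delta^*$ is the pushforward of the big divisor $K_X+\Delta$ and therefore stays big; together with (1) this yields that $K_{X^*}+\Delta^*$ is nef and big over $S$.
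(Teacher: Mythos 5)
First, note that the paper offers no proof of Theorem \ref{thm1.1} to compare against: it is quoted as background, with the proofs residing in \cite[Theorem 3.3]{fujino} (characteristic $0$) and \cite[Theorem 1.1]{tnk} (characteristic $p$). Measured against those proofs, your skeleton is the right one and matches their organization: a cone theorem and a contraction theorem for $\mathbb{Q}$-factorial log surfaces with no log canonicity hypothesis, followed by induction on the relative Picard number, with $\mathbb{Q}$-factoriality, the boundary property of the pushed-forward divisor, and the drop $\rho(X_{i+1}/S)=\rho(X_{i}/S)-1$ checked at each divisorial step; your termination count and the observation that bigness survives pushforward to the minimal model are correct and routine.

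The genuine gap is that the two statements carrying all of the weight --- the cone theorem and the contraction theorem in the absence of log canonicity --- are exactly the ones you defer, and the mechanisms you sketch for them are not adequate. Since $\Delta$ is effective, $(K_X+\Delta)\cdot C<0$ forces either $K_X\cdot C<0$, or else $\Delta\cdot C<0$, which happens only when $C\subset \Supp\Delta$ and $C^{2}<0$. Bend-and-break addresses only the first kind of ray; the second kind --- an arbitrary negative-self-intersection component of $\Supp\Delta$ --- is precisely where the failure of log canonicity bites, and contracting such a curve inside the projective category (existence of $\varphi_R$, projectivity and $\mathbb{Q}$-factoriality of the target) is not formal: it requires Artin-type contractibility criteria \cite{at} and, in characteristic $p$, semi-ampleness arguments in the style of Keel, which is the substance of Tanaka's paper. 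Likewise, your characteristic-$0$ shortcut of ``reducing to a log canonical model after adjusting coefficients'' is not Fujino's argument and is not obviously repairable: lowering the coefficients of $\Delta$ changes which rays are $(K_X+\Delta)$-negative, so the adjusted pair need not detect the ray you want to contract. Fujino instead works with his theory of quasi-log schemes and non-lc ideal sheaves, in which the cone theorem acquires an extra term supported on the non-lc locus that must then be handled by surface-specific arguments. As written, your proposal is an accurate table of contents for the cited proofs rather than a reconstruction of them.
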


First, we try to clarify that, given such a log surface $(X,\Delta)$ where 
$X$ is smooth, 
what every intermediate surface $X_i$ would look like
after running this log minimal model program.
Note that the final log surface $(X^{*},\Delta^{*})$ could be a minimal model or
a Mori fiber space $g:(X^{*},\Delta^{*}) \rightarrow C$.
The following theorem is our main result in this paper to achieve this aim.

\begin{thm}[Theorem \ref{thm3.1}]\label{thm1.2}
 Notations are as in Theorem\ref{thm1.1}. If $X$ is smooth and the 
 coefficients of $\Delta$ are $\leq 1-\varepsilon$, then
 $X_i$ is $\varepsilon$-log terminal for every $i$.
In particular,  $X^{*}$ is $\varepsilon$-log terminal.
\end{thm}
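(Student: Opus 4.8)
The plan is to run the argument by induction on the number $i$ of contractions performed, which reduces everything to a single step $f\colon (X_i,\Delta_i)\to (X_{i+1},\Delta_{i+1})$; the base case $X_0=X$ is smooth, hence $\varepsilon$-log terminal for every $\varepsilon\le 1$. On a $\mathbb Q$-factorial surface such a step is a divisorial contraction of a single irreducible curve $C$ with $C^{2}<0$ and $(K_{X_i}+\Delta_i)\cdot C<0$, and I write $K_{X_i}=f^{*}K_{X_{i+1}}+bC$, where $b=a(C,X_{i+1},0)=(K_{X_i}\cdot C)/C^{2}$. The first point is to bound $b$ from below. Writing $\Delta_i=dC+\Delta'$ with $d=\operatorname{coeff}_C\Delta_i\le 1-\varepsilon$ and $C\not\subseteq\Supp\Delta'$, intersecting with $C$ and dividing by $C^{2}<0$ gives
\[
b=\frac{(K_{X_i}+\Delta_i)\cdot C}{C^{2}}-d-\frac{\Delta'\cdot C}{C^{2}}>-d\ge -1+\varepsilon,
\]
because the first term is a ratio of two negative numbers and the last term is $\le 0$. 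Thus the divisor $C$ created by the contraction already satisfies $a(C,X_{i+1},0)>-1+\varepsilon$, and moreover $|b|<1-\varepsilon$ when $b<0$.

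For an arbitrary divisor $E$ over $X_{i+1}$ I would pull the equality $K_{X_i}=f^{*}K_{X_{i+1}}+bC$ back to a common resolution to obtain
\[
a(E,X_{i+1},0)=a(E,X_i,0)+b\cdot\operatorname{ord}_E(C).
\]
If $b\ge 0$ this yields $a(E,X_{i+1},0)\ge a(E,X_i,0)>-1+\varepsilon$ at once, by the inductive hypothesis. The essential case is $b<0$, where the same formula reads $a(E,X_{i+1},0)=a(E,X_i,|b|C)$; equivalently $K_{X_i}+|b|C=f^{*}K_{X_{i+1}}$, so $f$ is crepant for the pair $(X_i,|b|C)$ and for $X_{i+1}$. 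Hence $X_{i+1}$ is $\varepsilon$-log terminal if and only if $(X_i,|b|C)$ is $\varepsilon$-klt, and the problem becomes that of adding the single boundary $|b|C$, with coefficient $|b|<1-\varepsilon$, to the $\varepsilon$-log terminal surface $X_i$.

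To control $(X_i,|b|C)$ I would pass to the minimal resolution $\mu\colon Z\to X_i$. As $X_i$ is $\varepsilon$-log terminal, $Z$ is smooth, $\operatorname{Exc}(\mu)=\sum_l G_l$ is a tree of smooth rational curves, and $K_Z=\mu^{*}K_{X_i}+\sum_l a_lG_l$ with $a_l=a(G_l,X_i,0)\in(-1+\varepsilon,0]$. Writing $\mu^{*}C=\widetilde C+\sum_l m_lG_l$ with $m_l=\operatorname{ord}_{G_l}(C)\ge 0$, the crepant pullback of $(X_i,|b|C)$ is $(Z,B_Z)$ with
\[
B_Z=|b|\,\widetilde C+\sum_l\bigl(|b|m_l-a_l\bigr)G_l.
\]
On a smooth surface a simple normal crossing boundary whose coefficients are $\le 1-\varepsilon$ is automatically $\varepsilon$-log terminal, the smallest discrepancies arising from blowing up the crossing points; after passing to a log resolution this shows that $(Z,B_Z)$, and therefore $X_{i+1}$, is $\varepsilon$-log terminal as soon as every coefficient of $B_Z$ is $\le 1-\varepsilon$. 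The coefficient of $\widetilde C$ is $|b|<1-\varepsilon$, and the coefficients produced by the log resolution are then $\le 1-\varepsilon$ as well, so the whole theorem comes down to the inequalities
\[
|b|\,m_l-a_l\le 1-\varepsilon\qquad (\text{all }l).
\]

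I expect these inequalities to be the main obstacle. They are purely numerical statements about the dual graph of $\operatorname{Exc}(\mu)$ and the intersection numbers $\widetilde C\cdot G_l$, since both $b$ and the $m_l$ are determined by the negative definite intersection matrix of $\{\widetilde C\}\cup\{G_l\}$. The plan is to derive them by feeding the contraction inequality $(K_{X_i}+\Delta_i)\cdot C<0$ together with the coefficient bound $d\le 1-\varepsilon$ into this matrix; concretely, one solves for $b$ and the $m_l$ and uses the classification of $\varepsilon$-log terminal (equivalently, bounded quotient) surface singularities to reduce the verification to a finite list of dual graphs. The delicate part is that $|b|m_l$ can be large while $a_l$ is close to $-1+\varepsilon$, so the bound must exploit the precise link, visible already in the computation of $b$ above, between the coefficient $d$ of $C$ in $\Delta_i$ and the negativity of $(K_{X_i}+\Delta_i)\cdot C$.
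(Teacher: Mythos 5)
Your discrepancy computation $b=a(C,X_{i+1},0)>-d\ge -1+\varepsilon$ is correct, as is the reduction of the case $b<0$ to showing that $(X_i,|b|C)$ is $\varepsilon$-klt. But the proof stops exactly where the theorem lives: the inequalities $|b|m_l-a_l\le 1-\varepsilon$ are left as a ``plan,'' and in fact they \emph{cannot} be derived from your inductive hypothesis, because that hypothesis is too weak. Your induction retains only the statement ``$X_i$ is $\varepsilon$-log terminal and the coefficients of $\Delta_i$ are $\le 1-\varepsilon$,'' and the paper's own Example \ref{exmp4.2} (the one proving Proposition \ref{prop1.4}) is a counterexample to precisely this one-step lemma: contracting the triple fork with $n_0=5$, $n_1=n_2=n_3=2$ produces a surface $X$ with discrepancies $a_0=-\frac{6}{7}$, $a_1=a_2=a_3=-\frac{3}{7}$, hence $\varepsilon_0$-log terminal for any $\varepsilon_0<\frac17$; taking the boundary $bD'$ with $\frac{13}{19}<b=\frac{6}{7}\le 1-\varepsilon_0$ gives $(K_X+bD')\cdot D'<0$, yet contracting $D'$ yields a quadruple-fork singularity that is not even log canonical. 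So an $\varepsilon$-lt surface plus a boundary with coefficients $\le 1-\varepsilon$ can contract to something far worse than $\varepsilon$-lt; your key inequalities are simply false at this level of generality, and no classification of dual graphs will rescue them. (There are also two smaller imprecisions: coefficients exactly equal to $1-\varepsilon$ only give $\varepsilon$-lc, not $\varepsilon$-lt; and $B_Z$ need not be simple normal crossing, so ``the coefficients produced by the log resolution are $\le 1-\varepsilon$ as well'' is not automatic.)

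The missing idea is that the smoothness of the initial surface must be used \emph{globally}, not merely through the $\varepsilon$-lt-ness it induces on each intermediate surface: the pairs reachable by an MMP from a smooth surface form a strictly smaller class than the class of $\varepsilon$-lt pairs with bounded coefficients, and it is only on the smaller class that the one-step statement holds. The paper exploits this by working with the composite morphism $f\colon X\to X^\dag=X_j$ from the smooth top. After reducing to the case where $\mathrm{Exc}(f)$ contains no $(-1)$-curve (justified by the negativity lemma), one shows that every $f$-exceptional curve $F_i$ is the strict transform of some contracted curve and hence lies in $\Supp\Delta$ with coefficient $\delta_i\le 1-\varepsilon$; the discrepancy computation is then carried out on $X$ itself, where the identity $\sum(a_i+\delta_i)F_i=f^*\Delta^\dag-\Delta'+E_0$ has effective right-hand side, giving $a_i>-\delta_i\ge -1+\varepsilon$ directly. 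If you insist on an induction, the hypothesis carried along would have to record this extra structure (e.g.\ that the crepant pullback of $(X_i,\Delta_i)$ to the smooth surface $X$ has exceptional locus contained in $\Supp\Delta$ plus $(-1)$-curve chains), which is essentially what the paper's global argument encodes in one pass.
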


Next, a natural question is that, 
given a log surface $(X,\Delta)$ where 
$X$ is not smooth, 
what every intermediate surface $X_i$ would look like
after running log minimal model program.

\begin{prop}\label{prop1.3} 
In Theorem \ref{thm1.1}, 
$X_i$ is not always log canonical even if 
$X$ is log canonical.
\end{prop}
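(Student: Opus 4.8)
The plan is to produce an explicit counterexample rather than a structural argument. The feature of Theorem~\ref{thm1.1} that I would exploit is that it does \emph{not} assume the pair $(X,\Delta)$ to be log canonical: it requires only that $K_X+\Delta$ be $\mathbb{R}$-Cartier and that $X$ be $\mathbb{Q}$-factorial. This leaves room to start from a surface whose \emph{underlying variety} is log canonical while the \emph{pair} $(X,\Delta)$ is not, and to let a single step of the program drive a discrepancy below $-1$. Thus I would look for a $\mathbb{Q}$-factorial projective surface $X$ that is strictly log canonical (log canonical but not log terminal) at one point $p$ and klt (or smooth) away from $p$, so that the minimal resolution $g\colon W\to X$ carries an exceptional divisor $F$ over $p$ with discrepancy $a_F(X)=-1$.

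Next I would choose a smooth rational curve $C$ through $p$, meeting the exceptional configuration over $p$ so that in the total transform $g^{*}C=\widetilde C+\sum_{F} m_F F$ one has $m_F>0$ for the divisor $F$ above, and arrange $C^2<0$ on $X$. The central computation is then short. Since $X$ is $\mathbb{Q}$-factorial and the contraction $f\colon X\to X_1$ of $C$ has exceptional locus exactly $C$, writing $K_X=f^{*}K_{X_1}+a_0C$ and intersecting with $C$ gives $a_0=(K_X\cdot C)/C^{2}$. Pulling $K_X=f^{*}K_{X_1}+a_0C$ back to $W$ yields
\[
a_F(X_1)=a_F(X)+a_0\,m_F=-1+a_0\,m_F .
\]
So as soon as $a_0<0$ and $m_F>0$, the right-hand side is $<-1$ and $X_1$ is \emph{not} log canonical, which is precisely the assertion.

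It therefore suffices to land $a_0$ in the interval $(-1,0)$: the upper bound $a_0<0$ produces the non-log-canonical point on $X_1$, while the lower bound $a_0>-1$ keeps the contracted curve $C$ log canonical on $X_1$ and, more importantly, makes the step a legitimate one. Indeed, by adjunction $(K_X+C)\cdot C=2g(C)-2+\deg\mathrm{Diff}_C(0)$, and because $C$ passes through the strictly log canonical point $p$ the different forces $K_X\cdot C>0$; taking $|C^2|$ large then gives $a_0\in(-1,0)$. With $\Delta=\delta C$ for any $\delta\in(|a_0|,1]$ one computes $(K_X+\Delta)\cdot C=C^{2}(a_0+\delta)<0$, so $\mathbb{R}_{\ge 0}[C]$ is a $(K_X+\Delta)$-negative ray; since $C^2<0$ on the $\mathbb{Q}$-factorial surface $X$ this ray is extremal and its contraction is the first step of an MMP as in Theorem~\ref{thm1.1}, with $X_1$ again $\mathbb{Q}$-factorial. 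Note that $(X,\Delta)$ itself is not log canonical at $F$ (its discrepancy there is $-1-\delta m_F<-1$), which is exactly why the phenomenon is invisible in the log canonical setting but allowed here.

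The step I expect to be the real obstacle is the very first one: exhibiting a genuinely $\mathbb{Q}$-factorial surface that is strictly log canonical at $p$. Simple elliptic and cusp singularities, the natural candidates, are typically not $\mathbb{Q}$-factorial, so I would instead take $p$ to be a suitable finite cyclic quotient of a simple elliptic singularity, whose minimal resolution is a star-shaped tree of rational curves; such a singularity is $\mathbb{Q}$-factorial yet still carries a divisor of discrepancy exactly $-1$. To keep full control of all the numbers I would realize $X$ as a contraction of an explicit smooth surface $W$: fix $W$ together with this star-shaped configuration over $p$ and a rational curve $\widetilde C$ of prescribed self-intersection and prescribed contact with the configuration, contract everything except $\widetilde C$, and read off $a_0$, $m_F$, and $C^2$ directly from the intersection matrix on $W$, tuning $\widetilde C^{2}$ until $a_0\in(-1,0)$ while $-1+a_0m_F<-1$. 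Checking projectivity and extremality of $[C]$ in $\overline{NE}(X)$ is then routine surface geometry.
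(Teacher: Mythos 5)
Your proposal is correct and follows essentially the same route as the paper's own construction (Example \ref{exmp4.1}): there, $X$ is obtained by contracting a $(2,3,6)$ triple fork $E_0+E_1+E_2+E_3$ on a blowup of $\mathbb{P}^2$ --- a rational, hence $\mathbb{Q}$-factorial, strictly log canonical singularity with $a_{E_0}=-1$, which is exactly the cyclic-quotient-of-simple-elliptic type you propose --- and the smooth rational curve $D'$ through that point, with $(K_X+D')\cdot D'<0$ and boundary coefficient $\delta=1$, is contracted as a legitimate step of the LMMP of Theorem \ref{thm1.1}. The paper certifies that the resulting surface is not log canonical both by the dual-graph classification of \cite[Theorem 4.7]{km} (the quadruple fork is not an lc graph) and by computing the discrepancy of $E_0$, which is precisely your mechanism $a_F(X_1)=a_F(X)+a_0m_F=-1+a_0m_F<-1$.
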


Moreover, we have: 

\begin{prop}\label{prop1.4}
In Theorem \ref{thm1.1}, 
$X_i$ is not always log canonical 
even if $X$ is $\varepsilon$-log canonical and the coefficients of 
$\Delta$ are $\leq 1-\varepsilon$ for some $0<\varepsilon<1$.
\end{prop}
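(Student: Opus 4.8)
The plan is to exhibit an explicit counterexample, sharpening the construction behind Proposition \ref{prop1.3} so that the initial surface is not merely log canonical but $\tfrac14$-log canonical. I would fix $\varepsilon=\tfrac14$ and build a single projective $\mathbb Q$-factorial surface $X$ carrying one cyclic quotient singularity together with a curve $C$ through it, run one step of the minimal model program contracting $C$, and verify that the image acquires a non-log-canonical point.

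First I would fix the resolution. Let $Y$ be a smooth projective rational surface containing a chain of smooth rational curves $E_1-E_0-E_3$ with $E_1^2=E_3^2=-5$ and $E_0^2=-2$, together with a further $(-2)$-curve $\tilde C$ meeting $E_0$ transversally in one point and disjoint from $E_1,E_3$; such a configuration is produced by an elementary sequence of blow-ups of a Hirzebruch surface. Since the chain $E_1-E_0-E_3$ is negative definite it contracts to a normal point $P$ on a surface $X$, and $P$ is the cyclic quotient singularity with Hirzebruch--Jung data $[5,2,5]$. Solving the discrepancy equations on the minimal resolution gives discrepancy $-\tfrac34$ for every exceptional curve, and one checks the minimal log discrepancy of $P$ equals $\tfrac14$, so that $X$ is $\tfrac14$-log canonical (indeed $\tfrac14$-log terminal). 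As $P$ is a quotient singularity, $X$ is $\mathbb Q$-factorial, so $(X,\Delta)$ will fit the hypotheses of Theorem \ref{thm1.1}.

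Next I would set up the boundary and the contraction. Let $C\subset X$ be the image of $\tilde C$ and put $\Delta=\tfrac34 C$; the coefficient $\tfrac34=1-\varepsilon$ is admissible. Pulling $C$ back to $Y$ one computes $C^2=-\tfrac{11}{8}<0$, so $C$ spans an extremal ray of $\overline{NE}(X)$, and, writing $f\colon Y\to X$, the adjunction formula gives $K_X\cdot C=(-2-\tilde C^2)-a(E_0;X)=\tfrac34$. Hence $(K_X+\Delta)\cdot C=\tfrac34+\tfrac34\cdot(-\tfrac{11}{8})=-\tfrac{9}{32}<0$, so $C$ is a $(K_X+\Delta)$-negative extremal ray and the program may contract it, giving $X\to X_1$ with $X_1$ again $\mathbb Q$-factorial by Theorem \ref{thm1.1}. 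The minimal resolution of the image point $Q\in X_1$ is then the star-shaped graph with centre $E_0$ (self-intersection $-2$) and the three legs $E_1,E_3,\tilde C$ of self-intersections $-5,-5,-2$; one checks this form is negative definite, and solving the discrepancy equations gives central discrepancy $-\tfrac{12}{11}<-1$. Thus $X_1$ is not log canonical, which is the assertion.

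The heart of the matter, and the main obstacle, is the tension between the two requirements. To make $Q$ non-log-canonical the resolution of $Q$ must have a branch vertex, concretely a central curve whose three single-curve legs satisfy $\sum_i 1/m_i<1$ (here $\tfrac15+\tfrac15+\tfrac12=\tfrac{9}{10}$, which forces the central discrepancy below $-1$); whereas to obtain a $(K_X+\Delta)$-negative ray the penalty contributed by the singularity of $X$ to $K_X\cdot C$ must be small enough to be outweighed by $\Delta\cdot C<0$. Naively attaching three heavy legs to the \emph{contracted} curve makes $K_X\cdot C$ too positive, and the boundary, whose coefficient is capped at $1-\varepsilon$, cannot compensate. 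The resolution is to route $C$ through the \emph{interior} of the chain of $P$, so that on $X$ the curve $C$ meets only the mild central exceptional divisor $E_0$; the penalty is then exactly $-a(E_0;X)<1$, small enough for the negativity to survive. Checking that all numerical constraints, namely negative-definiteness of both configurations, the minimal log discrepancy of $P$, the sign of $(K_X+\Delta)\cdot C$, and the central discrepancy at $Q$, hold simultaneously for the choice $[5,2,5]$ with a $(-2)$-tail is the only computation that needs care.
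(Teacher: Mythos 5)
Your construction is correct and is essentially the paper's own approach (Example \ref{exmp4.2}): both contract a negative-definite exceptional configuration on a rational surface to create an $\varepsilon$-log canonical point, attach a boundary curve with coefficient $1-\varepsilon$ to the mildest exceptional curve so that one MMP step contracts it, and observe that the resulting fork (your $(2,5,5)$ star, the paper's quadruple fork) lies outside the classification of log canonical dual graphs; your chain $[5,2,5]$ even permits $\varepsilon=\tfrac14$ versus the paper's $\varepsilon\le\tfrac17$, and all your numbers ($C^2=-\tfrac{11}{8}$, $K_X\cdot C=\tfrac34$, $(K_X+\Delta)\cdot C=-\tfrac{9}{32}$, central discrepancy $-\tfrac{12}{11}$) check out. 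One harmless slip: since the discrepancies at $P$ are exactly $-\tfrac34=-1+\varepsilon$, your $X$ is $\tfrac14$-log canonical but \emph{not} $\tfrac14$-log terminal in the sense of Definition \ref{def2.1}, which requires strict inequality.
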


In Section \ref{sec4} we construct some examples to show that
Proposition \ref{prop1.3}, \ref{prop1.4} are true.
Furthermore, we show that $X_i$ could not even be MR log canonical if $X$ is not smooth.
In fact this shows that Fujino and Tanaka's minimal model program on log surfaces is more general 
than Alexeev's minimal model program which is running  mainly on MR log canonical surfaces in 
\cite[Section 10]{alex}
(see Definition \ref{def2.2} for the definition of MR log canonical).

\begin{ack}
The author would like to thank professor Fujino for so many inspirational suggestions and comments.
The author would like to thank H.Tanaka for his helpful comments too.
He would also like to thank Chen Jiang for many discussions on MR log canonical when they were attending
in the conference of HDAG held in Utah. 
\end{ack}

\section{Preliminaries}\label{sec2}
Let $(X,\Delta)$ be a log surface. 
If $X$ is smooth, then it is $\mathbb Q$-factorial. 
Choose a set $I \subset [0,1-\varepsilon]$ where
$\varepsilon \in [0,1]$ is a fixed real number.
Assume that the coefficients of $\Delta$ are in $I$.
Remember that a set $I$ of real number satisfies the 
{\em{descending chain condition}} or DCC, 
if it does not contain any infinite strictly decreasing sequence. 
Finally, recall that
the {\em{volume}} of an $\mathbb R$-divisor $D$ on a normal projective variety 
$X$ of dimension $n$ is defined as 
$$
{\rm vol}(D)= \limsup_{m\to \infty} 
\frac{h^{0}(\lfloor mD \rfloor)}{m^{n}/n!}
$$

We recall some kinds of singularities and  MR singularities following the same way of Alexeev. 

\begin{defn}[{\cite[Definition 1.5]{alex}}]\label{def2.1}
Let $(X,\Delta)$ be a log surface. 
Fixed a small non-negative real number $\varepsilon$, it is called:
\begin{itemize}
\item[1,] $\varepsilon$-log canonical, if the total discrepancies  $\geq -1+\varepsilon$
\item[2,]  $\varepsilon$-log terminal, if the total discrepancies  $> -1+\varepsilon$
\end{itemize}
for every resolution $f: Y \rightarrow X$. Simply, we call it $\varepsilon$-lc
or  $\varepsilon$-lt instead. Note that when $\varepsilon$ is not zero,
we can replace $\varepsilon$ by a smaller positive $\varepsilon'$,
and assume that $\varepsilon$-log canonical 
is $\varepsilon'$-log terminal.
\end{defn}

\begin{defn}[{\cite[Definition 1.7]{alex}}]\label{def2.2}
We call a log surface $(X,\Delta)$ MR log canonical,
MR $\varepsilon$-log canonical, MR $\varepsilon$-log terminal etc.
if we require the previous inequalities in Definition \ref{def2.1}
to hold not for all resolutions
$f: Y \rightarrow X$ but only for a distinguished one,
the minimal desingularization.
\end{defn}

A strange but trivial example of MR log canonical log surface is the following:

\begin{exmp}[]\label{exmp2.3}
Given a log surface $(X,\Delta)$, where $X$ is smooth and
$\Delta$ is a boundary. $(X,\Delta)$ is not necessarily log canonical 
in the usual sense.
But $id: X \rightarrow X$ is the minimal desingularization,
therefore $(X,\Delta)$ is MR log canonical.
\end{exmp}

\section{Main results}\label{sec3}

Now we go to the proof of Theorem \ref{thm1.2}.
Note that $\varepsilon$ in this theorem could be zero:

\begin{thm}[]\label{thm3.1}
Notations are as in Theorem \ref{thm1.1}. If $X$ is smooth and the 
coefficients of $\Delta$ are $\leq 1-\varepsilon$, then
 $X_i$ is $\varepsilon$-log terminal for every $i$.
In particular,  $X^{*}$ is $\varepsilon$-log terminal.
\end{thm}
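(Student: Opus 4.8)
The plan is to argue by induction on the number $i$ of contractions performed, where throughout ``$X_i$ is $\varepsilon$-log terminal'' is read as a statement about the underlying surface, i.e.\ about discrepancies computed with zero boundary. Since $X_0=X$ is smooth, all of its discrepancies are $\ge 1>-1+\varepsilon$, so the base case is immediate. For the inductive step it suffices to show that if $X_i$ is $\varepsilon$-log terminal then so is $X_{i+1}$. The structural fact I would exploit is that each step $\psi_i\colon X_i\to X_{i+1}$ is a birational \emph{morphism} contracting a single irreducible curve $C=C_i$ to a point $p\in X_{i+1}$; composing, $X_0\to X_i\to X_{i+1}$ exhibits the smooth surface $X_0$ as a common resolution of every $X_i$, so all discrepancies reduce to intersection theory on $X_0$. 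As $\psi_i$ is an isomorphism away from $C$, the surface $X_{i+1}$ can only fail to be $\varepsilon$-log terminal at $p$, and I only need to control the exceptional divisors over $p$.

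First I would pin down the geometry of $C$. Writing $\Delta_i=aC+\Delta_i'$ with $C\not\subset\Supp\Delta_i'$ and $0\le a\le 1-\varepsilon$, adjunction on the normal surface $X_i$ gives $(K_{X_i}+C)\cdot C=-2+\deg\operatorname{Diff}_C$, and combining this with $(K_{X_i}+\Delta_i)\cdot C<0$ forces $p_a(C)=0$ (so $C\cong\mathbb P^1$) together with $\deg\operatorname{Diff}_C<2$. Since each singular point of $X_i$ lying on $C$ contributes at least $\tfrac12$ to $\deg\operatorname{Diff}_C$, the curve $C$ passes through at most three singular points; hence on the minimal resolution the configuration over $p$ is either a chain (a cyclic quotient singularity) or a three-armed star. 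For a three-armed star the bound $\deg\operatorname{Diff}_C<2$ is exactly the Platonic inequality $\sum 1/m_j>1$, so $p$ is automatically log terminal; the real content is the quantitative $\varepsilon$-estimate.

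The clean heart of the argument is the discrepancy of $C$ itself. Writing $K_{X_i}=\psi_i^{*}K_{X_{i+1}}+c\,C$ and intersecting with $C$ gives $c=(K_{X_i}\cdot C)/C^2$. From $(K_{X_i}+\Delta_i)\cdot C<0$ we get $K_{X_i}\cdot C<-aC^2-\Delta_i'\cdot C$; dividing by $C^2<0$ and using $\Delta_i'\cdot C\ge 0$ yields
\[
c \;>\; -a-\frac{\Delta_i'\cdot C}{C^2}\;\ge\; -a\;\ge\;-1+\varepsilon .
\]
Thus the newly contracted divisor has discrepancy $>-1+\varepsilon$, using only the coefficient bound and negativity of $(K_{X_i}+\Delta_i)\cdot C$.

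The main obstacle is the remaining exceptional divisors, those coming from the singularities of $X_i$ that lie on $C$. Pulling $K_{X_i}=\psi_i^{*}K_{X_{i+1}}+c\,C$ back to $X_0$ shows each such divisor $E$ satisfies $a(E,X_{i+1})=a(E,X_i)+c\,\gamma_E$, where $\gamma_E\ge 0$ is the multiplicity of $E$ in the total transform of $C$. When $c\ge 0$ this is $\ge a(E,X_i)>-1+\varepsilon$ and we are done; the difficulty is $c<0$, where the discrepancies of the pre-existing singularities genuinely drop and the crude estimate only gives something like $-2+2\varepsilon$. Here I would instead compute the discrepancies directly from the negative-definite intersection matrix of the configuration identified above: in the chain case via the Jung--Hirzebruch continued fraction attached to $C^2$ and the arm, and in the three-armed case by solving the small linear system explicitly. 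In every case the inequality $(1-a)|C^2|<2-\deg\operatorname{Diff}_C$ forces the central self-intersection to be small enough that the most negative discrepancy stays above $-1+\varepsilon$; the model computation is the $D_4$-type star with three $A_1$ arms, where $|\widetilde C^{\,2}|<\tfrac{1}{2\varepsilon}+\tfrac32$ is precisely the inequality needed. Finally, since for surfaces the infimum of discrepancies is attained on the minimal resolution (further blow-ups only increase discrepancies), checking $>-1+\varepsilon$ for these finitely many divisors shows $X_{i+1}$ is $\varepsilon$-log terminal, completing the induction.
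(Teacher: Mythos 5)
Your induction scheme has a fatal flaw: the inductive step you are trying to prove --- ``if $X_i$ is $\varepsilon$-log terminal and the coefficients of $\Delta_i$ are $\leq 1-\varepsilon$, then contracting a $(K_{X_i}+\Delta_i)$-negative curve yields an $\varepsilon$-log terminal surface'' --- is simply false, and the paper itself exhibits the counterexample (Propositions \ref{prop1.3}, \ref{prop1.4} and Example \ref{exmp4.2}). There, $X$ is a surface with a single $D$-type log terminal singularity (a triple fork with central curve $E_0$, $E_0^2=-5$, and three $(-2)$-arms), whose discrepancies are $a_0=-\tfrac{6}{7}$, $a_1=a_2=a_3=-\tfrac{3}{7}$; so $X$ is $\varepsilon'$-log terminal for every $\varepsilon'<\tfrac17$. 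Taking $\Delta=bD'$ with $\tfrac{13}{19}<b\leq 1-\varepsilon'$, the curve $D'$ is $(K_X+bD')$-negative, yet contracting it produces a quadruple fork, which is not even log canonical. This also pinpoints the incorrect step in your geometric analysis: from ``$C$ meets at most three singular points'' you conclude the configuration over $p$ is a chain or a three-armed star, but that tacitly assumes each singular point on $C$ is a cyclic quotient (chain) singularity. An $\varepsilon$-log terminal point can itself be a three-armed star, and when the strict transform of $C$ attaches to its central curve you get a four-armed graph --- exactly the situation above, where your claimed inequality ``$(1-a)|C^2|<2-\deg\operatorname{Diff}_C$ forces the most negative discrepancy above $-1+\varepsilon$'' fails outright. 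So the part of your argument that is correct (the discrepancy of $C$ itself satisfies $c>-a\geq -1+\varepsilon$) cannot be completed to a proof by any local computation at $p$: the hypothesis ``$X_i$ is $\varepsilon$-log terminal'' genuinely loses the information that makes the theorem true.

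What the missing information is, and how the paper retains it: the theorem holds because the singularities of $X_i$ do not arise arbitrarily --- they are created by contracting curves that (apart from $(-1)$-curves) are components of $\Supp\Delta$, whose coefficients are bounded by $1-\varepsilon$. The paper therefore never inducts on the singularity type of $X_i$. Instead it fixes $j$, uses the smooth surface $X$ itself as a resolution $f\colon X\to X_j$, first contracts away all $(-1)$-curves in $\mathrm{Exc}(f)$ (checking via the negativity lemma that the relative MMP still terminates at $X_j$), and then shows that every remaining exceptional curve $F_i$ must lie in $\Supp\Delta$: its contraction was $(K+\Delta)$-negative while $K\cdot h_i^*(C_i)\geq 0$ (no $(-1)$-curves left), forcing $\Delta\cdot h_i^*(C_i)<0$. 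Writing $\Delta=\sum\delta_iF_i+\Delta'$ and using $K_X+\Delta=f^*(K_{X_j}+\Delta_j)+E_0$ with $E_0\geq 0$ from the relative MMP, one compares coefficients to get $a_i>-\delta_i\geq -1+\varepsilon$. Your estimate $c>-a$ for the newly contracted curve is the one-step shadow of this comparison, but the bound on the \emph{older} exceptional divisors comes from knowing they are components of $\Supp\Delta$ --- a property your induction hypothesis must carry along (e.g.\ ``$X_i$ admits a resolution by the smooth $X$ whose exceptional curves are $(-1)$-curves or components of $\Supp\Delta$''), and without which the statement you are inducting on is false.
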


\begin{proof}
\Step
Run log minimal model program
on $K_{X}+\Delta$ as in Theorem \ref{thm1.1}:
$$
(X,\Delta)=(X_{0},\Delta_{0}) \rightarrow (X_{1},\Delta_{1}) 
\rightarrow
\cdots
\rightarrow (X_{k},\Delta_{k})=(X^{*},\Delta^{*})
$$
where $(X^{*},\Delta^{*})$ is a minimal model or a Mori fiber space.
In the following proof, we consider everything over $X_j$ for a fixed $j$.
Put $X^\dag=X_j$ for this fixed $j$.
Then take $X^\dag$ as a base 
(if needed, shrink $X^\dag$ to be affine since 
$\varepsilon$-log terminal or not is a local property)
and run $(K_{X}+\Delta)$-LMMP on
the relative morphism $f: X \rightarrow X^\dag$, which ends up again on $X^\dag$
and $K_{X^\dag}+\Delta^\dag$ is nef over $X^\dag$.
Each step we have a relative morphism $ X_{i} \rightarrow X^\dag$ ($i \leq j$)
and denote it by $X_{i}/X^\dag$.
We use $f_{i}$ and $h_{i}$ to denote 
the morphisms $(X_{i},\Delta_{i})/X^\dag \rightarrow (X^\dag,\Delta^\dag)/X^\dag$
and $ (X,\Delta)/X^\dag \rightarrow (X_{i},\Delta_{i})/X^\dag$
with that $h_{j}=f_{0}=f$.
By \cite[Section 3]{fujino} and \cite[Section 3]{tnk},
$$
K_{X_{i}}+\Delta_{i}=f_{i}^{*}(K_{X^\dag}+\Delta^\dag)+ E_{i}
$$
where $E_{i}$ are all effective
over $X^\dag$ for every $0 \leq i < j$.
In particular,
$h_{i*}(K_{X}+\Delta)=K_{X_{i}}+\Delta_{i}$,
$h_{i*}(\Delta)=\Delta_{i}$.
Furthermore, every curve in $\mathrm{Exc}(f)=\Supp(E_{0})$ is a
smooth rational curve by \cite[Proposition 3.8]{fujino} and \cite[Theorem 3.19]{tnk}.

\bigskip \Step
Now we may assume that 
there is no $(-1)$-curve in $\mathrm{Exc}(f)$. 
Indeed, if there is some $(-1)$-curve, say $C$, in  $\mathrm{Exc}(f)$,
then by Castelnuovo's theorem, contracting this $(-1)$-curve in $X/X^\dag$
leads to a new smooth surface $X'/X^\dag$.
Therefore we can run 
another $(K_{X'}+\Delta')$-LMMP 
over $X^\dag$ until reaching to a final log surface
$(\widetilde{X},\widetilde{\Delta})/X^\dag$,
where $\Delta'$ is the image of  $\Delta$.
Every assumption of $(X,\Delta)$ is obviously keeping if we replace 
$(X,\Delta)$ by $(X',\Delta')$ except that we need to prove
$(\widetilde{X},\widetilde{\Delta}) \cong (X^\dag,\Delta^\dag)$.
We have three morphisms over $X^\dag$:
$\pi: X \rightarrow X'$, $g: X'\rightarrow \widetilde{X}$ and
 $\rho:\widetilde{X} \rightarrow X^\dag$ 
such that
$$
K_{X}+\Delta=\pi^{*}(K_{X'}+\Delta')+aC
$$ 
$$
K_{X'}+\Delta'=g^{*}(K_{\widetilde{X}}+\widetilde{\Delta})+ E'_{0}
$$
$$
K_{\widetilde{X}}+\widetilde{\Delta}=\rho^{*}(K_{X^\dag}+\Delta^\dag)+ D
$$
where 
$\pi: X \rightarrow X'$ is the Castelnuovo's contraction, 
$\rho$ is not necessarily 
the identity and $K_{\widetilde{X}}+\widetilde{\Delta}$ 
is nef over $X^\dag$. Then by negativity 
lemma (see \cite[Lemma 3.39 and Lemma 3.40]{km}),
we have that $-D \geq 0$, since $K_{\widetilde{X}}+\widetilde{\Delta}-D\sim_{\rho} 0$
and $D$ is $\rho$-exceptional.
Remember that $K_{X}+\Delta=f^{*}(K_{X^\dag}+\Delta^\dag)+ E_{0}$,
$f^{*}=\pi^{*}g^{*}\rho^{*}$. That is, 
$E_{0}\sim_{f} \pi^{*}g^{*}D + \pi^{*}E'_{0}+aC$. 
By negativity lemma again, $D > 0$
since $E_{0}$ is effective and both sides have the same support.
Therefore we get a contradiction
unless $\rho$ is an identity. That is,
$(\widetilde{X},\widetilde{\Delta}) \cong (X^\dag,\Delta^\dag)$.
Then, by contracting 
$(-1)$-curves finitely many times, we may assume that 
$\mathrm{Exc}(f)$ contains no $(-1)$-curve from now on.

\bigskip \Step
Assume that $C_{i}$ is the contracted curve in step $i$ of the 
log minimal model program, then 
$(K_{X_{i}}+\Delta_{i})\cdot C_{i}<0$. Therefore 
$$
(K_{X}+\Delta)\cdot h_{i}^{*}(C_{i})=(K_{X_{i}}+\Delta_{i})\cdot C_{i}<0
$$
Note that $(h_{i}^{*}(C_{i}))^{2}=(C_{i})^{2}<0$ by the negativity lemma.
Then $K_{X} \cdot h_{i}^{*}(C_{i}) \geq 0$ since $h_{i}^{*}(C_{i})$ is effective
and its support contains no $(-1)$-curve. 
Indeed, if 
$K_{X} \cdot h_{i}^{*}(C_{i}) < 0$, there must be a curve, say,
$E$, in $\Supp h_{i}^{*}(C_{i})$ such that 
$K_{X} \cdot E < 0$. But $E^{2}< 0$ since $E$ is in $\mathrm{Exc}(f)$.
Thus it is a 
 $(-1)$-curve which contradicts our assumption.
Therefore $\Delta \cdot h_{i}^{*}(C_{i})<0$. Then
$$
\Delta_{i} \cdot C_{i}=h_{i*}(\Delta)\cdot C_{i}=\Delta \cdot h_{i}^{*}(C_{i})<0
$$
That is, $C_{i}$ is in $\Supp \Delta_{i}$, and its strict transform
is in $\Supp \Delta$.
Therefore all those curves in $\mathrm{Exc}(f)$ must be such a 
strict transform of $C_{i}$ under the assumption of the above step.

\bigskip \Step
Next, we need to prove that, for the resolution $f: X \rightarrow X^\dag$ where 
$K_{X}=f^{*}K_{X^\dag}+ \sum a_{i}F_{i}$, we have that $ a_{i} > -1+\varepsilon$.
Note that $ K_{X}+\Delta=f^{*}(K_{X^\dag}+\Delta^\dag)+ E_{0}$ where
$E_{0}$ is effective in $\mathrm{Exc}(f)$ 
and $F_{i}$ is in $\Supp \Delta$
by the above steps. Furthermore, let $\Delta=\sum\delta_{i}F_{i}+ \Delta'$ where
$\sum F_{i}$ and $\Delta'$ have no common components.
Therefore, $f_{*}\Delta'=\Delta^\dag$. Then
$$
 K_{X}+\Delta=f^{*}K_{X^\dag}+\sum a_{i}F_{i}+\sum \delta_{i}F_{i}+ \Delta'=f^{*}K_{X^\dag}+f^{*}\Delta^\dag+ E_{0}
$$
That is, 
$$
\sum(a_{i}+\delta_{i})F_{i}=f^{*}\Delta^\dag-\Delta'+ E_{0}
$$
in which both sides are supported in $\mathrm{Exc}(f)$ and 
the right hand side is effective.
Thus comparing both sides, $a_{i}+\delta_{i}>0$. 
That is, $a_{i} > -\delta_{i}\geq -1+\varepsilon$ since 
the coefficients of $\Delta$ are $\leq 1-\varepsilon$.

Finally, we claim that, 
the resolution $f: X \rightarrow X^\dag$ is a log resolution. That is,
the reduced $\sum F_{i}$ must be a simple normal crossing curve. 
We can prove this claim by \cite[Theorem 4.7]{km} and the above steps,
which is pointed out by Tanaka.
But here we use a different way.
Remember that
$F_{i}$ are all smooth extremal rational curves since 
$X^\dag$ has rational singularities by \cite[Theorem 6.2]{fjtnk} for any characteristic.
Furthermore, the dual graph of $\sum F_{i}$ must be a tree.
This shows that the reduced $\sum F_{i}$ must be a simple normal crossing curve.
We get what we want. 
\end{proof}

From the above theorem, we know that when $X$ is smooth,
those contracting curves in log 
minimal model program consist of some images of $(-1)$-curves and some
components of $\Supp \Delta$.
Several direct but important implications of Theorem \ref{thm3.1} are the following.
When $K_{X}+\Delta$ is big, $K_{X^{*}}+\Delta^{*}$ is nef and big
on the minimal model.
What we have done in the proof of Theorem \ref{thm3.1} is in fact showing that 
$f: X' \rightarrow X^{*}$ is exactly the minimal 
desingularization 
and  $(X^{*},\Delta^{*})$ is MR $\varepsilon$-log terminal.
Then the following corollaries are just simple consequences
of \cite[Theorem 7.6, Theorem 7.7, Theorem 8.2]{alex}.
It is another way to see that Fujino and Tanaka's mimimal model program 
on log surfaces cover Alexeev's mimimal model program stated in 
\cite[Section 10]{alex}.

\begin{cor}[] \label{cor3.2}
Let $(X,\Delta)$ be a projective log surface where
$X$ is smooth and $K_{X}+\Delta$ is big.
Fixing $\varepsilon>0$, let $I\subset [0,1-\varepsilon]$ be a {\rm DCC} set
and the coefficients of $\Delta$ be in $I$.
If there is a positive integer $M$ such that $(K_{X^{*}}+\Delta^{*})^{2}\leq M$
where $(X^{*},\Delta^{*})$ is a minimal model of $(X,\Delta)$,
then these $(X^{*}, \Supp \Delta^{*})$ belong to a bounded family.
\end{cor}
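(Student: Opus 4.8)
The plan is to check that the minimal model $(X^*,\Delta^*)$ produced by the program meets, verbatim, the hypotheses of Alexeev's boundedness results \cite[Theorem 7.6, Theorem 7.7, Theorem 8.2]{alex}, and then to quote those theorems. There is no new geometric input to produce here; the work is entirely in verifying each hypothesis, so I would organize the argument as a translation of our output into Alexeev's framework.

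First I would record the singularity bound. By Theorem \ref{thm3.1}, $X^*$ is $\varepsilon$-log terminal, and its proof in fact exhibits the minimal desingularization $X'\to X^*$ as a log resolution along which every discrepancy exceeds $-1+\varepsilon$; hence $(X^*,\Delta^*)$ is MR $\varepsilon$-log terminal. This is exactly where the hypothesis $\varepsilon>0$ (strengthening the possibly-zero $\varepsilon$ allowed in Theorem \ref{thm3.1}) is essential: it bounds the singularities of $X^*$ uniformly away from failing to be log terminal, which is what Alexeev's boundedness requires.

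Next I would verify the polarization and the coefficient data. Since $K_X+\Delta$ is big, the last sentence of Theorem \ref{thm1.1} gives that $K_{X^*}+\Delta^*$ is nef and big; being nef on a surface, its self-intersection computes its volume, so $\mathrm{vol}(K_{X^*}+\Delta^*)=(K_{X^*}+\Delta^*)^2\le M$. For the coefficients, the identity $h_{i*}\Delta=\Delta_i$ recorded in Step 1 of the proof of Theorem \ref{thm3.1} shows that $\Delta^*$ is a birational pushforward of $\Delta$; such a pushforward assigns to each surviving component the same coefficient it carried in $\Delta$, so the coefficients of $\Delta^*$ again lie in the DCC set $I\subset[0,1-\varepsilon]$.

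Assembling these facts, $(X^*,\Delta^*)$ is an MR $\varepsilon$-log terminal surface with $K_{X^*}+\Delta^*$ nef and big, with coefficients in a fixed DCC set, and with volume bounded by $M$, which is precisely the input of \cite[Theorem 8.2]{alex}, the theorems \cite[Theorem 7.6, Theorem 7.7]{alex} furnishing the boundedness of the surface together with the reduced support of its boundary. Applying these theorems yields that the pairs $(X^*,\Supp\Delta^*)$ form a bounded family. The only delicate point, and the place I expect to spend the most care, is the translation into Alexeev's setting: one must confirm that the MR $\varepsilon$-log terminal property obtained from the minimal resolution matches the singularity class in which \cite[Theorem 7.6, Theorem 7.7, Theorem 8.2]{alex} are stated, and that the DCC hypothesis on $I$ is invoked there exactly as we use it. Once this matching is made explicit, the conclusion is immediate from the cited theorems.
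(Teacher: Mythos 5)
Your proposal is correct and follows essentially the same route as the paper: the paper likewise observes that the proof of Theorem \ref{thm3.1} exhibits the minimal desingularization of $X^{*}$ and hence that $(X^{*},\Delta^{*})$ is MR $\varepsilon$-log terminal with $K_{X^{*}}+\Delta^{*}$ nef and big, and then deduces the corollary directly from \cite[Theorem 7.6, Theorem 7.7, Theorem 8.2]{alex}. Your additional checks (coefficients preserved under pushforward, volume equal to self-intersection) are exactly the implicit verifications the paper leaves to the reader.
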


\begin{cor}[] \label{cor3.3}
Let $(X,\Delta)$ be a projective log surface where
$X$ is smooth and $K_{X}+\Delta$ is big.
Fixing $\varepsilon \geq 0$, let $I\subset [0,1-\varepsilon]$ be a {\rm DCC} set
and the coefficients of $\Delta$ be in $I$.
Then $(K_{X^{*}}+\Delta^{*})^{2}$ is a DCC set.
In particular the volume ${\rm vol}(K_{X}+\Delta)$
is bounded from below away from $0$.
\end{cor}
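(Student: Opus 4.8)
The plan is to reduce the statement to Alexeev's descending chain condition theorem for the self-intersection of log canonical divisors, exactly as anticipated in the discussion preceding the corollary. The essential input is already contained in Theorem~\ref{thm3.1} and its aftermath: since $X$ is smooth and $K_{X}+\Delta$ is big, the program of Theorem~\ref{thm1.1} terminates with a genuine minimal model $(X^{*},\Delta^{*})$ on which $K_{X^{*}}+\Delta^{*}$ is nef and big, the smooth surface obtained after contracting the $(-1)$-curves is exactly the minimal desingularization of $X^{*}$, and therefore $(X^{*},\Delta^{*})$ is MR $\varepsilon$-log terminal. Thus every such $(X^{*},\Delta^{*})$ lies in the class of pairs governed by Alexeev's results.

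Before applying those results I would check that the coefficients of $\Delta^{*}$ still belong to the fixed DCC set $I$. This is immediate from Step~3 of the proof of Theorem~\ref{thm3.1}: the only curves contracted by the program are components of $\Supp\Delta_{i}$, so passing from $\Delta$ to its pushforward $\Delta^{*}$ only deletes certain components and leaves the coefficients of the surviving ones unchanged. Hence the coefficients of $\Delta^{*}$ form a subset of $I\subset[0,1-\varepsilon]$, which again satisfies DCC. Having assembled the MR $\varepsilon$-log terminal property, the DCC coefficient set, and the nef and big log canonical divisor, I would invoke \cite[Theorem 7.6, Theorem 7.7, Theorem 8.2]{alex} to conclude that the collection of numbers $(K_{X^{*}}+\Delta^{*})^{2}$ satisfies DCC.

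For the final assertion I would identify the volume with this self-intersection. Applying Step~1 with $X^{\dag}=X^{*}$ gives $K_{X}+\Delta=f^{*}(K_{X^{*}}+\Delta^{*})+E_{0}$ with $E_{0}$ effective and $f$-exceptional. The volume is unchanged upon adding an effective exceptional divisor to a pullback and is preserved under pullback, so ${\rm vol}(K_{X}+\Delta)={\rm vol}(f^{*}(K_{X^{*}}+\Delta^{*}))=(K_{X^{*}}+\Delta^{*})^{2}$, the last equality because $K_{X^{*}}+\Delta^{*}$ is nef and big on a surface. Since $K_{X}+\Delta$ is big these numbers are positive, and a subset of $(0,\infty)$ satisfying DCC cannot accumulate at $0$ --- otherwise one could extract an infinite strictly decreasing sequence --- so its infimum is strictly positive, which is exactly the asserted lower bound.

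The point demanding the most care is the faithful matching of hypotheses with Alexeev's framework: one must confirm that the MR $\varepsilon$-log terminal condition of Definition~\ref{def2.2} is precisely what \cite[Theorem 7.6, Theorem 7.7, Theorem 8.2]{alex} require (in particular allowing $\varepsilon=0$, in contrast with the boundedness of Corollary~\ref{cor3.2}), and that the DCC conclusion there concerns the self-intersection, equivalently the volume, of the log canonical divisor rather than a mere boundedness statement. By contrast the coefficient bookkeeping and the elementary remark that positive reals satisfying DCC are bounded away from $0$ are routine.
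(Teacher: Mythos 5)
Your proposal is correct and follows essentially the same route as the paper: the paper's proof likewise identifies ${\rm vol}(K_{X}+\Delta)=(K_{X^{*}}+\Delta^{*})^{2}$ via Theorem~\ref{thm3.1} (using that $(X^{*},\Delta^{*})$ is MR $\varepsilon$-log terminal with $X'\to X^{*}$ its minimal desingularization) and then quotes \cite[Theorem 8.2]{alex}. The only difference is that you spell out details the paper leaves implicit --- the coefficient bookkeeping for $\Delta^{*}$, the invariance of the volume under adding an effective exceptional divisor to a pullback, and the fact that a DCC set of positive reals is bounded away from $0$ --- all of which are accurate.
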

\begin{proof}
Since ${\rm vol}(K_{X}+\Delta)={\rm{vol}}(K_{X}^{*}+\Delta^{*})=(K_{X^{*}}+\Delta^{*})^{2}$ by 
Theorem \ref{thm3.1}, this corollary is  a direct consequence of 
\cite[Theorem 8.2]{alex}.
\end{proof}

\begin{rem}[] \label{rem3.4}
Note that in Corollary \ref{cor3.2},
the $\varepsilon$ is smaller, the bounded family of
$(X^{*},\Supp \Delta^{*})$ is bigger. 
When $\varepsilon$ goes to $0$, all those $X^{*}$ may not be in a bounded family,
so not be $(X^{*},\Supp \Delta^{*})$.
See  \cite[Remark 1.5]{lin} for the example showing that  $X^{*}$ could be 
$\mathbb Q$-Fano and not in a bounded family. 
Note also that Corollary \ref{cor3.3} is an answer of the question coming 
from the first version of Di Cerbo's paper 
\cite[Question 4.3]{dic2} which has been confirmed by his second version.
\end{rem}

\section{Examples}\label{sec4}

By \cite[Section 10]{alex}, we easily see that
if the log surface $(X,\Delta)$ is MR $\varepsilon$-log canonical,
then so is every $(X_{i},\Delta_{i})$
in the step of log minimal model program;
by Grothendieck spectral sequence, 
it is also easy to see that if $X$ has only rational singularities,
then so has every  $X_{i}$. Now it is natural to 
generalize Theorem \ref{thm3.1} and ask that
if $X$ is $\varepsilon$-log canonical,
is so every $X_i$ or not.
But unfortunately we have the following example:

\begin{exmp}[]\label{exmp4.1}
There is a well known example of log canonical surface.
In fact, it is rational but not log terminal.
Blowing up at a point of $\mathbb P ^{2}$, we get a $(-1)$-curve $E_{0}$;
find three points at $E_{0}$ and blow up 
several times (at these three points and some points at the exceptional curves over them),
we can easily get a surface $Y$ and
four smooth rational curves $E_{0}$, $E_{1}$, $E_{2}$, $E_{3}$ on it such that
$n_{0}=-E_{0}^{2}\geq 3$, $n_{1}=-E_{1}^{2}=2$, $n_{2}=-E_{2}^{2}=3$, $n_{3}=-E_{3}^{2}=6$
where by abusing of notations, we still use $E_{0}$ to denote its strict transform
on $Y$. By construction,
$E_{i}\cdot E_{0}=1$, $E_{i}\cdot E_{j}=0$ where $i, j=1,2,3$.
Let $E=E_{0}+E_{1}+E_{2}+E_{3}$, then its dual graph is a triple fork.
See also that its
intersection matrix is negative definite. Therefore by Artin's criterion \cite{at},
we can contract $E$ and finally get a surface $X$ with a singular point.
Now we have $f: Y \rightarrow X$ with $K_{Y}=f^{*}K_{X}+ \sum a_{i}E_{i}$.
Using adjunction, we have that:
$$
-2+n_{0}=-a_{0}n_{0}+a_{1}+a_{2}+a_{3};
$$
$$
0=-2+n_{1}=a_{0}-a_{1}n_{1}=a_{0}-2a_{1};
$$
$$
1=-2+n_{2}=a_{0}-a_{2}n_{2}=a_{0}-3a_{2};
$$
$$
4=-2+n_{3}=a_{0}-a_{3}n_{3}=a_{0}-6a_{3}.
$$
Solve these equations we have $a_{0}=-1$, $a_{1}=-\frac{1}{2}$,
$a_{2}=-\frac{2}{3}$, $a_{3}=-\frac{5}{6}$.
These show that 
the singularity of $X$ is exactly log canonical but not log terminal.
Keeping this example in mind, we construct an example as following:

Similar to the above blowing-up method, we can easily construct 
a surface $Y$ and 
five smooth rational curves $D$, $E_{0}$, $E_{1}$, $E_{2}$, $E_{3}$ on it such that
$n=-D^{2}$ is as big as we want, $n_{0}=-E_{0}^{2}\geq 3$,
$n_{1}=-E_{1}^{2}=2$, $n_{2}=-E_{2}^{2}=3$, $n_{3}=-E_{3}^{2}=6$
and 
$E_{i}\cdot E_{0}=D \cdot E_{0}=1$, $E_{i}\cdot E_{j}= D\cdot E_{i}=0$ where $i, j=1,2,3$.
Let $E=E_{0}+E_{1}+E_{2}+E_{3}$ and $F=E+D$. Then $E$ is a triple fork
and $F$ is a quadruple fork in dual graph. Note that both of 
the intersection matrices of 
$E$ and $F$ are negative definite.
By contracting $E$ on $Y$ we get a morphism $f$ from $Y$ 
to a log canonical surface $X$ which is rational
but not log terminal as above. Now consider the log surface $(X, D')$ 
where $D'$ is the image of $D$.
 $D'$ is still a smooth rational curve by construction since $E \cdot D=1$.
Note that $(K_{X}+D')\cdot D'< 0$. Indeed, Let 
$f^{*}D'=D+\sum c_{i}E_{i}$. Then by $E_{i}\cdot f^{*}D' =0$,
$$
0=1-c_{0}n_{0}+c_{1}+c_{2}+c_{3};
$$
$$
0=c_{0}-c_{1}n_{1};
$$
$$
0=c_{0}-c_{2}n_{2};
$$
$$
0=c_{0}-c_{3}n_{3}.
$$
That is, $c_{0}=\frac{1}{n_{0}-1}$, $c_{1}=\frac{c_{0}}{2}$, 
$c_{2}=\frac{c_{0}}{3}$, $c_{3}=\frac{c_{0}}{6}$. Then 
$$
(K_{X}+D')\cdot D'=(K_{Y}+D)\cdot f^{*}D'=(K_{Y}+D)\cdot (D+\sum c_{i}E_{i})
$$
$$
=(K_{Y}+D) \cdot D+\sum c_{i} (K_{Y}\cdot E_{i})+\sum c_{i} (D\cdot E_{i})
$$
$$
= -2+c_{0} (-2+n_{0})+c_{1}(-2+2)+c_{2}(-2+3)+c_{3}(-2+6)+ c_{0}
$$
$$
= -2+c_{0} (-2+n_{0})+\frac{c_{0}}{3}+\frac{2c_{0}}{3}+ c_{0}=-2+1+c_{0}=c_{0}-1< 0
$$
since $c_{0}=\frac{1}{n_{0}-1}<1$.
Now contracting $D'$ on $X$ by log minimal model program,
we get a log surface $(X^{*}, 0)$ where $X^{*}$ is no longer
log canonical since the dual graph of $F$ is a quadruple fork
which is not in the classification of dual graph of 
log canonical singularities in \cite[Theorem 4.7]{km}.
Furthermore, it is not even MR log canonical by calculating 
the discrepancy of $E_{0}$.
But remember that 
$X^{*}$ still has  rational singularities. 
\end{exmp}

\begin{exmp}[]\label{exmp4.2}
We just gave an example for Proposition \ref{prop1.3}
where $\varepsilon=0$. In fact, by 
a similar construction as above, we can get some examples
where $\varepsilon>0$. 
A sketch of construction is the following.
As Example \ref{exmp4.1},
we can easily construct 
a surface $Y$ and 
five smooth rational curves $D$, $E_{0}$, $E_{1}$, $E_{2}$, $E_{3}$ on it with
$n=-D^{2}$, $n_{i}=-E_{i}^{2}$ such that $n=3$, $n_{0}=5$, $n_{1}=n_{2}=n_{3}=2$.
Let $E=E_{0}+E_{1}+E_{2}+E_{3}$ and $F=E+D$. Then $E$ is a triple fork
and $F$ is a quadruple fork which is not in the classification of dual graph of 
log canonical singularities. Note that both of 
the intersection matrices of 
$E$ and $F$ are negative definite.
Choose an $\varepsilon$ such that $0<\varepsilon\leq \frac{1}{7}$. 
The same calculation as Example \ref{exmp4.1} shows that 
by contracting $E$ on $Y$ we get a morphism $f$ from $Y$ 
to an $\varepsilon$-log canonical surface $X$. Now consider the log surface $(X, bD')$ 
where $D'$ is the image
of $D$. Note that $D'$ is still a smooth rational curve by construction.
Choose a proper real number $b$ such that $(K_{X}+bD')\cdot D'< 0$.
By careful calculations as in Example 
\ref{exmp4.1}, we can check that $(K_X+bD')\cdot D'<0$ for 
$b>\frac{13}{19}$. Therefore,
$(K_X+(1-\varepsilon)D')\cdot D'<0$ for $0<\varepsilon \leq \frac{1}{7}$.
Now contracting $D'$ on $(X, bD')$ by log minimal model program,
we get a log surface $(X^{*}, 0)$ where $X^{*}$ is no longer
log canonical.
This gives an example to confirm Proposition \ref{prop1.4}.
\end{exmp}

\begin{rem}[]\label{rem4.3}
The above two examples are based on one of the dual graphs 
of log canonical singularities in \cite[Theorem 4.7]{km}. In fact, we can construct
similar examples based on the other dual graphs there and get a bunch of similar examples.
\end{rem}

It will be interesting to ask the following question:
\begin{ques}[]\label{ques4.4}
In Theorem \ref{thm1.1}, if $X$ is canonical, 
is $X_i$ log canonical?
\end{ques}


\end{document}